\theoremstyle{plain}
\newtheorem{theorem}{Theorem}
\newtheorem{proposition}{Proposition}
\newtheorem{lemma}{Lemma}
\newtheorem{corollary}{Corollary}
\newtheorem{definition}{Definition}
\theoremstyle{remark}
\newtheorem*{remark}{Remark}
\newtheorem{example}{Example}
\begin{document}

\title[Projected composition operators]{Projected composition operators on pseudoconvex domains}
\author{ \v{Z}eljko \v{C}u\v{c}kovi\'c }
\begin{abstract}
Let $\Omega\subset \mathbb{C}^n$ be a smooth bounded pseudoconvex domain 
and $A^2 (\Omega)$ denote its Bergman space. Let $P:L^2(\Omega)\longrightarrow A^2(\Omega)$ 
be the Bergman projection.	For a measurable $\varphi:\Omega\longrightarrow \Omega$, 
the projected composition operator is defined by $(K_\varphi f)(z) = P(f \circ \varphi)(z), 
z \in\Omega, f\in A^2 (\Omega).$ In 1994, Rochberg studied boundedness of $K_\varphi$ 
on the Hardy space of the unit disk and obtained different necessary or sufficient 
conditions for boundedness of $K_\varphi$.  In this paper we are interested in projected 
composition operators on Bergman spaces on pseudoconvex domains. 
We study boundedness of this operator under the smoothness assumptions on 
the symbol $\varphi$ on $\overline\Omega$.
\end{abstract}

\keywords{Projected composition operators, strongly pseudoconvex domains, $\overline\partial$-Neumann operator}
\subjclass{47B33; 32A36; 32T05;}
\address{Department of Mathematics and Statistics,  University of Toledo, Toledo, Ohio, 43606}
\email{Zeljko.Cuckovic@utoledo.edu} 
\maketitle

\section{Introduction} 

Let $\Omega\subset \mathbb{C}^n$ be a smooth bounded pseudoconvex domain 
and let $H(\Omega)$ denote the space of holomorphic functions on $\Omega$.  
Let $L^2(\Omega)$ be the standard space of square-integrable functions with respect 
to the Lebesgue measure $dV$ with the usual $L^2$ norm, denoted by $\| . \|_2$. 
The Bergman space is defined as $A^2 (\Omega) = H(\Omega)\cap L^2(\Omega)$.
The Bergman projection is the orthogonal projection operator 
$P:L^2(\Omega)\longrightarrow A^2(\Omega).$

Let $\varphi:\Omega\longrightarrow \Omega$ be a measurable function given as 
$\varphi = (\varphi_1, \ldots, \varphi_n).$  We define two operators
\[(D_\varphi f)(z) = (f \circ \varphi)(z), z \in\Omega, f\in A^2 (\Omega)\]
and 
\[(K_\varphi f)(z) = P(f \circ \varphi)(z), z \in\Omega, f\in A^2 (\Omega).\]
The operator $K_\varphi$ is called the projected composition operator on 
the Bergman space $A^2(\Omega).$  When $\varphi$ is a holomorphic self 
map of $\Omega$, then  there is no need to project $f \circ \varphi,$ hence 
$K_\varphi = C_\varphi$ is the usual composition operator. Composition 
operators have been studied extensively over the last several decades.  
In particular, for the Hardy and Bergman spaces on the open unit disk 
$\mathbb{D}$, as a consequence of the Hardy-Littlewood subordination 
principle, $C_\varphi$ is always bounded.  Of course in higher dimensions, 
this will not be the case any more, for example in the open unit ball in 
$\mathbb{C}^n,$ \cite[Section 3.5]{C-M}.

Rochberg \cite{R} studied projected composition operators on the Hardy space 
of the unit disk. He established several theorems related to the boundedness of 
$K_\varphi.$ His elegant work laid out a foundation for the study of these operators.  
A precursor for his work was a paper \cite{P-M-M}. Since then, there has not been 
much work done on this topic, we mention only a recent paper by Zhao \cite{Zhao}, 
who extended some results of Rochberg.

In this paper we are concerned with the boundedness of $K_\varphi$ on Bergman 
spaces, not just on the unit disk $\mathbb{D}$ in the complex plane, but we would 
like to find boundedness conditions for 
pseudoconvex domains in $\mathbb{C}^n$.  In the next section we prove two theorems. The first one 
is for bounded pseudoconvex domains and the symbol $\varphi$ that is holomorphic near the 
boundary. The second theorem is our main result, and we prove it for smooth bounded 
pseudoconvex domains under the assumption that the symbol satisfies 
$\overline\partial \varphi_i = 0$ on some part of the boundary that is mapped 
into the boundary, for all $i$.  In both theorems, we can express the 
boundedness of $K_\varphi$ in terms of the Carleson measure condition 
on the pull back measure. If, in addition, we assume that the domain is a smooth bounded strongly pseudoconvex domain, the second theorem leads to a more concrete condition on the boundedness of $K_\varphi$ in terms of the Bergman kernel and that is the content of the important Corollary 1.

\section{Main results}

We start this section with two initial results that are not difficult to obtain.  
The proofs are analogous to the initial results obtained for the unit disk 
$\mathbb{D}.$  We include some arguments for the sake of completeness 
of our exposition.

For the operator $D_\varphi$ the boundedness is expressed in terms of a 
Carleson measure condition.

\begin{definition}\label{T:Definition}
Let $\Omega$ be a bounded domain in $\mathbb{C}^n.$ A positive measure $\mu$ on 
$\Omega$ is a Carleson measure on 	$A^2(\Omega)$ if there exists a constant $C>0$ such 
that for all $f\in A^2 (\Omega),$
\[\int_{\Omega} |f(z)|^2 d\mu(z) \leq C\|f\|_2^2.\]
\end{definition} 

Suppose now that $\Omega$ is a bounded domain and $\varphi:\Omega\longrightarrow \Omega$ 
is measurable. We want to define the pull back measure $V_\varphi$ on $\Omega$: For a set 
$E\in \Omega$, $V_\varphi(E) = V(\varphi^{-1}(E)).$

We now state the criterion for boundedness of the operator $D_\varphi.$

\begin{proposition}\label{T:Proposition}
Let $\Omega$ be a bounded domain in $\mathbb{C}^n.$  Suppose 
$\varphi:\Omega\longrightarrow \Omega$ is measurable. Then the operator $D_\varphi$ 
is a bounded operator on $A^2(\Omega)$ if and only if $V_\varphi$ is a Carleson measure 
on $A^2(\Omega).$
\end{proposition}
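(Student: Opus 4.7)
The plan is to reduce the statement to a direct application of the change-of-variables formula for push-forward measures, at which point the two conditions become literally the same inequality.

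First I would compute $\|D_\varphi f\|_2^2$ by changing variables via $\varphi$. Since $V_\varphi$ is by definition the push-forward of the Lebesgue measure under $\varphi$ (i.e.\ $V_\varphi(E)=V(\varphi^{-1}(E))$), the standard measure-theoretic change-of-variables identity gives
\[
\|D_\varphi f\|_2^2 \;=\; \int_\Omega |f(\varphi(z))|^2\, dV(z) \;=\; \int_\Omega |f(w)|^2\, dV_\varphi(w)
\]
for every $f\in A^2(\Omega)$. This identity holds for any measurable $\varphi:\Omega\to\Omega$ and any nonnegative measurable integrand, and in particular for $|f|^2$ with $f\in A^2(\Omega)$.

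Next I would read off the equivalence. If $V_\varphi$ is a Carleson measure on $A^2(\Omega)$ with constant $C$, the above identity immediately yields $\|D_\varphi f\|_2^2\le C\|f\|_2^2$, so $D_\varphi$ maps $A^2(\Omega)$ into $L^2(\Omega)$ with norm at most $\sqrt{C}$; note that $D_\varphi f$ need not be holomorphic, but for the boundedness of $D_\varphi:A^2(\Omega)\to L^2(\Omega)$ (which is what is relevant here, since $P$ is applied afterwards in the definition of $K_\varphi$) this is all we need. Conversely, if $D_\varphi$ is bounded with $\|D_\varphi f\|_2^2\le C\|f\|_2^2$, then the same identity gives $\int_\Omega |f|^2\,dV_\varphi\le C\|f\|_2^2$ for every $f\in A^2(\Omega)$, which is precisely the definition of $V_\varphi$ being Carleson.

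There is no real obstacle; the only point requiring a word of care is the justification of the change-of-variables formula, which is not a diffeomorphic substitution but the elementary fact that integration against a push-forward measure equals integration of the composition. One verifies it first for characteristic functions of measurable sets (where it is the definition of $V_\varphi$), extends by linearity to simple functions, and then passes to nonnegative measurable functions by monotone convergence; this is entirely standard and does not use any regularity of $\varphi$ beyond measurability.
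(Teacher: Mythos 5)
Your proposal is correct and follows essentially the same route as the paper: both reduce the statement to the identity $\|D_\varphi f\|_2^2=\int_\Omega|f|^2\,dV_\varphi$ via the push-forward change-of-variables formula (the paper cites Federer, Theorem 2.4.18, where you sketch the standard simple-function/monotone-convergence argument) and then read off the equivalence directly.
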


\begin{proof}
The proof uses a well known formula for the composition and integration; a good reference is Theorem 2.4.18 in \cite{F}.

For $f\in A^2(\Omega),$	
\[\|D_\varphi f\|_2^2 = \int_{\Omega} |f\circ \varphi(z)|^2 dV(z) = \int_{\Omega} |f(w)|^2 dV_\varphi(w).\] 
Hence if $D_\varphi$ is bounded, there exists $C>0$ such that $\|D_\varphi f\|_2^2\leq C\|f\|_2^2$ 
for all $f\in A^2(\Omega).$  This precisely means that
\[\int_{\Omega} |f(w)|^2 dV_\varphi(w) \leq C\|f\|_2^2,\]
for all $f\in A^2(\Omega),$ which implies that $dV_\varphi$ is Carleson.  

Conversely if $dV_\varphi$ is a Carleson measure, then for $f\in A^2(\Omega),$ following 
the steps above we show that 
$\|D_\varphi f\|_2^2 = \int_{\Omega} |f(w)|^2 dV_\varphi(w) \leq C\|f\|_2^2, C>0, $ which 
shows that $D_\varphi$ is bounded on $A^2(\Omega)$. 
\end{proof}

The first simple example of a bounded $K_\varphi$ is the case when $\varphi$ is a measurable function such that $\varphi(\Omega)$ is compactly supported in $\Omega.$ 

In what follows we use the notation $f(z)\lesssim g(z)$ to denote that there
exists a constant $C$, independent of $z$ such that $f(z)\leq Cg(z)$.

\begin{lemma}\label{T:Lemma1}
Let $\Omega$ be a bounded domain in $\mathbb{C}^n$ and let $\varphi:\Omega\longrightarrow \Omega$ be measurable such that $\varphi(\Omega)$ is compactly supported in $\Omega.$ Then $K_\varphi$ is bounded on $A^2(\Omega).$ 
	\end{lemma}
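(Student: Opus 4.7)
The plan is to reduce the boundedness of $K_\varphi$ to the boundedness of $D_\varphi$ on $A^2(\Omega)$, and then apply Proposition \ref{T:Proposition} by verifying that the pullback measure $V_\varphi$ is Carleson. Since $P:L^2(\Omega)\to A^2(\Omega)$ is an orthogonal projection and hence has operator norm $1$, the factorization $K_\varphi f = P(D_\varphi f)$ gives $\|K_\varphi f\|_2 \le \|D_\varphi f\|_2$, so it suffices to control $\|f\circ\varphi\|_2$ in terms of $\|f\|_2$.

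The key ingredient is the standard fact that point evaluation is locally uniformly bounded on the Bergman space: if $K\subset\Omega$ is compact, then there is a constant $C_K>0$ such that $\sup_{w\in K}|f(w)|\le C_K\|f\|_2$ for every $f\in A^2(\Omega)$. This follows at once from the sub-mean value property applied on a ball of radius equal to half the distance from $K$ to $\partial\Omega$, together with the Cauchy--Schwarz inequality.

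Let $K = \overline{\varphi(\Omega)}$; by hypothesis $K$ is a compact subset of $\Omega$. For any $f\in A^2(\Omega)$, the pullback measure $V_\varphi$ is supported in $K$, so using the point-evaluation estimate,
\[
\int_\Omega |f(w)|^2\, dV_\varphi(w) \;=\; \int_K |f(w)|^2\, dV_\varphi(w) \;\le\; \sup_{w\in K}|f(w)|^2 \cdot V_\varphi(\Omega) \;\lesssim\; V(\Omega)\,\|f\|_2^2.
\]
Thus $V_\varphi$ is a Carleson measure on $A^2(\Omega)$, and Proposition \ref{T:Proposition} yields the boundedness of $D_\varphi$. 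Composing with the bounded projection $P$ gives the boundedness of $K_\varphi$.

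There is no real obstacle here; the only point to be careful about is ensuring that the point-evaluation constant $C_K$ is indeed uniform over $K$, which is immediate once one fixes a positive lower bound for the distance from $K$ to $\partial\Omega$.
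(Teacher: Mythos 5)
Your proof is correct and is essentially the paper's argument: both rest on the uniform point-evaluation bound $\sup_{w\in \overline{\varphi(\Omega)}}|f(w)|\lesssim \|f\|_2$ over the compact image, together with $\|P\|\le 1$. Routing the estimate through Proposition \ref{T:Proposition} and the Carleson condition for $V_\varphi$ is only a repackaging of the paper's direct bound on $\|f\circ\varphi\|_2$, since that proposition is itself just the change-of-variables identity.
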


\begin{proof} 
Let $f\in A^2(\Omega).$  Then 
\[\|K_\varphi f\|_2 \leq \|f\circ \varphi\|_2 = \int_{\Omega}|(f\circ \varphi)(z)|^2 dV(z) 
\lesssim \sup_{w \in \varphi(\Omega)} |f(w)|^2. \]
Since $\varphi(\Omega)$ is compact in $\Omega$, the Cauchy Integral Formula gives 
$|f(w)| \lesssim \| f\|_2$ for all $w\in \varphi(\Omega)$. This then implies that 
$\|K_\varphi f\|_2^2 \lesssim \| f\|_2$, for all $f\in A^2(\Omega).$ Now we conclude that 
$K_\varphi$ is a bounded operator on $A^2(\Omega).$
\end{proof} 

To prove our first theorem, we need to use the $\overline\partial$-Neumann techniques. 
Hence, we now introduce the $\overline\partial$-Neumann operator $N$ and the Kohn's 
formula that connects the Bergman projection $P$ and $N$.

Let $\Box=\overline\partial^*\overline\partial
+\overline\partial\overline\partial^*:L^2_{(0,1)}(\Omega)\to L^2_{(0,1)}(\Omega)$ 
be the complex Laplacian, where $\overline\partial^*$ is the Hilbert space adjoint 
of $\overline\partial:L^2_{(0,1)}(\Omega)\to L^2_{(0,2)}(\Omega)$.   When $\Omega$ 
is a bounded pseudoconvex domain in $\mathbb{C}^n$, H\"{o}rmander \cite{Hor} 
showed that $\Box$ has a bounded inverse $N$ called the 
$\overline\partial$-Neumann operator. We also mention the books \cite{Ch-Sh,St2} for a thorough study of the $\overline\partial$-Neumann operator.  The $\overline\partial$-Neumann operator is an 
important tool in several complex variables. Kohn in \cite{K} 
showed that the Bergman projection is connected to the $\overline\partial$-Neumann 
operator by the formula $P=I-\overline\partial^*N\overline\partial$.  

The next theorem sort of resembles Rochberg's Theorem 4.2.

\begin{theorem}\label{T:Theorem1}
Let $\Omega$ be a bounded pseudoconvex domain, and let 
$\varphi:\Omega\longrightarrow \Omega,$ $\varphi 
=  (\varphi_1, \ldots, \varphi_n)\in C^1(\overline\Omega).$  
Let $K= \varphi(\partial\Omega)\cap \partial\Omega$ and let 
$F= \varphi ^{-1}(K) \subset \partial\Omega.$ Assume there is an open neighborhood 
$U$ of $F$ such that for every $i=1, \ldots, n$, 
$\overline\partial \varphi_i = 0$ on $U\cap \overline\Omega.$  Then $K_\varphi - D_\varphi$ 
is bounded on $A^2 (\Omega).$  Hence $K_\varphi$ is bounded if and only if $D_\varphi$ is.
\end{theorem}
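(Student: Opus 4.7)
The plan is to exploit Kohn's formula $P = I - \overline{\partial}^* N \overline{\partial}$ to rewrite the difference as
\[
K_\varphi f - D_\varphi f = P(f\circ\varphi) - f\circ\varphi = -\overline{\partial}^* N\, \overline{\partial}(f\circ\varphi).
\]
Since $\Omega$ is bounded pseudoconvex, the canonical solution operator $\overline{\partial}^* N$ is bounded from $\overline{\partial}$-closed $(0,1)$-forms in $L^2_{(0,1)}(\Omega)$ into $L^2(\Omega)$. Because $\overline{\partial}^2=0$, the form $\overline{\partial}(f\circ\varphi)$ is automatically $\overline{\partial}$-closed, so it suffices to establish the estimate
\[
\|\overline{\partial}(f\circ\varphi)\|_{L^2_{(0,1)}(\Omega)} \lesssim \|f\|_2, \qquad f\in A^2(\Omega).
\]
I would first verify this for $f$ in a dense subclass (say $A^2\cap C^1(\overline{\Omega})$) and then pass to the limit; on this subclass the chain rule together with holomorphy of $f$ gives
\[
\overline{\partial}(f\circ\varphi)(z) \;=\; \sum_{i=1}^n \bigl(\tfrac{\partial f}{\partial w_i}\bigr)(\varphi(z))\,\overline{\partial}\varphi_i(z).
\]

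The hypothesis $\overline{\partial}\varphi_i \equiv 0$ on $U\cap\overline{\Omega}$ localizes the support of this $(0,1)$-form to $W := \overline{\Omega}\setminus U$. The crucial step is then to show that $\varphi(W) \Subset \Omega$, which lets the interior Cauchy estimate $|\partial f/\partial w_i(w)|\lesssim \|f\|_2$ (valid uniformly for $w$ at a positive distance from $\partial\Omega$) control the integrand on $W$. Combined with the fact that $\overline{\partial}\varphi_i$ is bounded on $\overline{\Omega}$ by $\varphi\in C^1(\overline{\Omega})$, this will give $\|\overline{\partial}(f\circ\varphi)\|_{L^2_{(0,1)}}\lesssim \|f\|_2 \cdot V(W)^{1/2}\lesssim \|f\|_2$, completing the proof of the estimate.

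The main (and really only) obstacle is the compactness claim $\varphi(W)\Subset\Omega$. Here is how I would argue it. The set $W$ is compact since $U$ is open. Take $z\in W$. If $z\in\Omega$, the standing assumption $\varphi(\Omega)\subset\Omega$ gives $\varphi(z)\in\Omega$. If instead $z\in\partial\Omega$, then $z\in\partial\Omega\setminus U$; since $F\subset U$, we have $z\notin F$, so $\varphi(z)\notin K = \varphi(\partial\Omega)\cap\partial\Omega$. Yet by continuity of $\varphi$ on $\overline{\Omega}$ we still have $\varphi(z)\in \overline{\Omega}$ and of course $\varphi(z)\in\varphi(\partial\Omega)$; the only way to reconcile $\varphi(z)\notin\partial\Omega$ with $\varphi(z)\in\overline{\Omega}$ is $\varphi(z)\in\Omega$. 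Thus $\varphi(W)$ is a continuous image of a compact set lying entirely in $\Omega$, hence a compact subset of $\Omega$, so $\mathrm{dist}(\varphi(W),\partial\Omega)=:\delta>0$ as required.

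Once the estimate $\|K_\varphi f-D_\varphi f\|_2\lesssim \|f\|_2$ is proved on the dense subclass, it extends to all of $A^2(\Omega)$ and shows that $K_\varphi-D_\varphi$ is a bounded operator. The final "hence" statement in the theorem then follows immediately: $K_\varphi = D_\varphi + (K_\varphi-D_\varphi)$, so boundedness of one of $K_\varphi, D_\varphi$ is equivalent to boundedness of the other, and via Proposition \ref{T:Proposition} to the Carleson condition on $V_\varphi$.
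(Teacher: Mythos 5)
Your argument is correct and follows essentially the same route as the paper: Kohn's formula $P=I-\overline\partial^*N\overline\partial$, the chain-rule expansion of $\overline\partial(f\circ\varphi)$ killed on $U\cap\overline\Omega$ by the hypothesis, the observation that $\varphi(\overline\Omega\setminus U)$ is a compact subset of $\Omega$ (proved by the same case analysis on $z\in\Omega$ versus $z\in\partial\Omega\setminus U$), and the interior Cauchy estimate for $\partial f/\partial w_i$ there. The only cosmetic difference is that you bound the remaining integral by $\sup_{w\in\varphi(W)}|\partial f/\partial w_i(w)|^2\cdot V(W)$ directly, whereas the paper first rewrites it against the pullback measure $dV_\varphi$ before invoking the same Cauchy estimate.
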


\begin{proof}
In case $K= \emptyset$, we apply Lemma \ref{T:Lemma1}. 
	
Since $U$ is an open set, it follows that $\overline\Omega - U$ is compact in $\overline\Omega$, 
hence $\varphi(\overline\Omega - U) $ is also compact in $\overline\Omega$.  Now it is not 
difficult to show that 
\[\varphi(\overline\Omega - U) \cap \partial\Omega = \emptyset.\]
Suppose on the contrary, there exists $z\in \partial\Omega$ such that 
$z=\varphi(z_1), z_1 \in\overline\Omega - U $. If $z_1 \in \Omega$, then $z=\varphi(z_1)\in\Omega$, 
by the assumption on $\varphi$, which is not possible since $z\in \partial\Omega.$ 
If $z_1 \in \partial\Omega - U$, then clearly $z_1$ is not an element of $F$.  Therefore 
$z\in \Omega$, since $\varphi$ only maps elements of the boundary set $F$ into $\partial\Omega.$  
This is a contradiction again.

Using Kohn's formula, for $f\in A^2(\Omega),$  we have 
\begin{align*}
(K_\varphi-D_\varphi)f(z) &= (P-I)(f\circ\varphi)(z)\\
&= -\overline\partial^*N\overline\partial\left((f\circ\varphi)(z)\right)\\
&= -\overline\partial^*N\left(\sum_{i=1}^n \sum_{k=1}^n 
\frac{\partial f}{\partial w_k}(\varphi(z))\frac{\partial\varphi_k}{\partial \overline z_i}(z) 
d\overline z_i\right).
\end{align*}
Since $\Omega $ is a bounded pseudoconvex domain, the operator $\overline\partial^*N$ 
is bounded by \cite{Hor}.  Thus 
\[\|(K_\varphi - D_\varphi)f\|_2\lesssim 
\sum_{i=1}^n \sum_{k=1}^n \left\|\frac{\partial f}{\partial w_k}(\varphi)
\frac{\partial\varphi_k}{\partial\overline z_i}\right\|_2.\]
Let us fix $i$ and $k$ now.  Then since $\frac{\partial\varphi_k}{\partial\overline z_i} = 0$ 
on $U\cap \Omega$,

\begin{equation*}
\begin{split}
\left\|\frac{\partial f}{\partial w_k}(\varphi)\frac{\partial\varphi_k}{\partial\overline z_i}\right\|^2_2
&= \int_{\Omega} \left|\frac{\partial f}{\partial w_k}(\varphi(z))\right|^2 
\left|\frac{\partial\varphi_k}{\partial\overline z_i}(z)\right|^2 dV(z)\\
&= \int_{\Omega -U} \left|\frac{\partial f}{\partial w_k}(\varphi(z))\right|^2 dV(z)\\
&= \int_{\varphi(\Omega -U)} \left|\frac{\partial f}{\partial w_k}(\zeta)\right|^2 dV_{\varphi}(\zeta).
\end{split}
\end{equation*}

From the discussion at the beginning of the proof, it follows that $\varphi(\overline\Omega - U)$ 
is compactly supported in $\Omega$, hence by the Cauchy integral formula for the derivatives, 
we have 
\[\int_{\varphi(\Omega -U)} \left|\frac{\partial f}{\partial w_k}(\zeta)\right|^2 dV_{\varphi}(\zeta) 
\lesssim \|f\|_2^2.\] 
This proves that 
\[\|(K_\varphi-D_\varphi)f\|_2 \lesssim \|f\|_2,\]
for all $f\in A^2(\Omega).$ In other words, $K_\varphi-D_\varphi$ 
is a bounded operator on $A^2(\Omega).$
\end{proof}

It is a natural question if we can relax the assumption in the previous theorem that 
$\overline\partial \varphi_i = 0$ in a neighborhood of $F$ for all $i=1, \ldots, n.$ Namely, 
can we assume that for all $i=1, \ldots, n,$ we have $\overline\partial \varphi_i = 0$ on $F$ only.  
This problem turns out to be more difficult.  The next theorem is the main result of our paper.  
We show that it is possible to obtain the same conclusion on the boundedness of $K_\varphi$ 
under some additional smoothness assumptions. 

A typical strategy to show boundedness of a new type of operators is to show that they 
are bounded on a dense subspace of the given space. In Rochberg's case, he used the
polynomials that are dense in the Hardy space of the unit disk $\mathbb{D}$ to show that 
$K_\varphi$ is bounded.  Since we work on smooth bounded pseudoconvex domains, to 
prove our main theorem, we will use the space $A^{\infty} (\overline\Omega)$, which is 
the subspace of functions in $C^{\infty} (\overline\Omega)$ that are holomorphic in 
$\Omega$.  According to a result of \cite{C}, for a smooth bounded pseudoconvex 
domain $\Omega$, $A^{\infty}(\overline\Omega)$ is dense in $A^2(\Omega).$

In what follows, for $z\in \Omega$, let $\delta(z)$ denote the 
distance of $z$ to $\partial\Omega.$

We now state our main theorem.

\begin{theorem}\label{T:Theorem2}
Suppose that $\Omega$ is a smooth bounded pseudoconvex domain and 
let $\varphi:\Omega\longrightarrow \Omega,$ $\varphi = (\varphi_1, \ldots, \varphi_n) 
\in C^2(\overline\Omega).$ Let $K= \varphi(\partial\Omega)\cap \partial\Omega$ and 
let $F= \varphi ^{-1}(K) \subset \partial\Omega.$ Assume there is an open neighborhood 
$U$ of $F$ such that $\varphi$ is 1-1 on $U\cap \Omega.$ Also assume that for every 
$i=1, \ldots, n$, $\overline\partial \varphi_i = 0$ on $F$ and the real Jacobian $J_\varphi$ 
satisfies $|J_\varphi(z)| \geq m >0$ on $F$, for some $m > 0.$  Then $K_\varphi - D_\varphi$ 
is bounded on $A^2 (\Omega).$  Hence $K_\varphi$ is bounded if and only if $D_\varphi$ is.		
\end{theorem}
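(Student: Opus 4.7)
The plan is to adapt the Kohn-formula argument of Theorem~\ref{T:Theorem1} to the weaker hypothesis that $\overline\partial\varphi_i$ vanishes only on $F$, using a density argument together with a near-$F$ change of variables that exploits the injectivity of $\varphi$ and the non-degenerate real Jacobian. Since $\Omega$ is a smooth bounded pseudoconvex domain, $A^\infty(\overline\Omega)$ is dense in $A^2(\Omega)$ by \cite{C}, so it suffices to establish $\|(K_\varphi-D_\varphi)f\|_2\lesssim\|f\|_2$ for $f\in A^\infty(\overline\Omega)$. For such $f$ one has $f\circ\varphi\in C^2(\overline\Omega)$, so Kohn's formula gives
\[(K_\varphi-D_\varphi)f = -\overline\partial^*N\overline\partial(f\circ\varphi),\]
and by the $L^2$-boundedness of $\overline\partial^*N$ from \cite{Hor}, the problem reduces to proving
\[I_{i,k} := \int_\Omega\left|\frac{\partial f}{\partial w_k}(\varphi(z))\right|^2\left|\frac{\partial\varphi_k}{\partial\bar z_i}(z)\right|^2 dV(z)\lesssim \|f\|_2^2\]
for all $i,k$.

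I would then split $I_{i,k}$ into the pieces over $\Omega\setminus U$ and over $U\cap\Omega$. The $\Omega\setminus U$ piece is handled exactly as in Theorem~\ref{T:Theorem1}: the same boundary analysis shows that $\varphi(\overline\Omega\setminus U)$ is compactly contained in $\Omega$, so the Cauchy integral formula bounds $|\partial f/\partial w_k(\varphi(z))|\lesssim\|f\|_2$ there while $|\partial\varphi_k/\partial\bar z_i|$ stays bounded on $\overline\Omega$. On $U\cap\Omega$ the new hypotheses enter. After shrinking $U$ by continuity so that $|J_\varphi|\geq m/2$ on $\overline U\cap\overline\Omega$ and $\varphi$ remains 1-1 there, $\varphi$ is a $C^2$ diffeomorphism of $U\cap\Omega$ onto its image. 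The assumption $\overline\partial\varphi_k|_F=0$ combined with $\varphi_k\in C^2(\overline\Omega)$ yields, by first-order Taylor expansion, the pointwise bound $|\partial\varphi_k/\partial\bar z_i(z)|\lesssim\operatorname{dist}(z,F)$ on $U$. The change of variables $w=\varphi(z)$ with $|J_\varphi|^{-1}\lesssim 1$ and the bi-Lipschitz comparison $\operatorname{dist}(\varphi^{-1}(w),F)\sim\operatorname{dist}(w,K)$ then transforms the $U\cap\Omega$ piece into
\[\int_{\varphi(U\cap\Omega)}\left|\frac{\partial f}{\partial w_k}(w)\right|^2 \operatorname{dist}(w,K)^2\,dV(w).\]

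To close the bound I would Taylor-expand $\rho\circ\varphi$ at a point $p\in F$ for a defining function $\rho$ of $\Omega$. Because $\rho\circ\varphi\leq 0$ on $\overline\Omega$ with equality exactly on $F\subseteq\partial\Omega$, the tangential gradient of $\rho\circ\varphi$ must vanish on $F$, so $d(\rho\circ\varphi)(p)$ is a positive multiple of $d\rho(p)$; the additional vanishing $\overline\partial\varphi|_F=0$ together with $|J_\varphi|\geq m$ on $F$ control the next-order expansion. This provides the comparison between $\operatorname{dist}(w,K)$ and $\delta(w)=\operatorname{dist}(w,\partial\Omega)$ on the image $\varphi(U\cap\Omega)$, which together with the universal Hardy-type inequality $\int_\Omega|\nabla f|^2\delta^2\,dV\lesssim \|f\|_2^2$ for holomorphic $f$ (valid on any bounded domain by an interior Cauchy estimate plus a Fubini swap) delivers the required $L^2$ bound.

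The principal obstacle is this last step: on a general bounded pseudoconvex domain the weight $\operatorname{dist}(w,K)^2$ is generically much larger than $\delta(w)^2$, so the standard Hardy inequality cannot be invoked directly. One must instead use the thin, parabolic-type geometry of $\varphi(U\cap\Omega)$ near $K$, which is forced by the three new hypotheses acting in concert --- injectivity on $U\cap\Omega$, the Jacobian lower bound on $F$, and $\overline\partial\varphi|_F=0$ --- and the $C^2$-smoothness of $\varphi$ is essential to justify the Taylor expansions of both $\overline\partial\varphi_k$ and $\rho\circ\varphi$ along $F$.
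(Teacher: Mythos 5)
Your skeleton coincides with the paper's: density of $A^{\infty}(\overline\Omega)$ in $A^2(\Omega)$, Kohn's formula and the $L^2$-boundedness of $\overline\partial^*N$ to reduce to the terms $\bigl\|\tfrac{\partial f}{\partial w_k}(\varphi)\tfrac{\partial\varphi_k}{\partial\overline z_i}\bigr\|_2$, the split into $\Omega\setminus U$ (handled by compact containment of $\varphi(\overline\Omega\setminus U)$ and Cauchy estimates, as in Theorem \ref{T:Theorem1}) and a collar around $F$ (Taylor expansion of $\overline\partial\varphi_k$, change of variables, weighted gradient inequality). But the proof does not close. You arrive at the weight $\operatorname{dist}(w,K)^2$ on $\varphi(U\cap\Omega)$, correctly observe that $\operatorname{dist}(w,K)\geq\delta(w)$ so the inequality $\int_\Omega|\nabla f|^2\delta^2\,dV\lesssim\|f\|_2^2$ cannot be applied, and then only gesture at ``thin, parabolic-type geometry'' without an argument. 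That last step is the actual content of the theorem, and the way you have routed the estimate makes it unrecoverable: the bi-Lipschitz comparison $\operatorname{dist}(\varphi^{-1}(w),F)\sim\operatorname{dist}(w,K)$ throws away exactly the information that matters, namely how $\varphi$ moves points in the direction normal to $\partial\Omega$.

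The missing ingredient --- and the place where the hypothesis $|J_\varphi|\geq m$ on $F$ actually does its work --- is the pointwise comparison $\delta(z)\lesssim\delta(\varphi(z))$ for $z$ in a neighborhood of $F$. The paper obtains it by flattening the boundary near $p\in F$ and near $\varphi(p)\in K$ with diffeomorphisms, so that $\varphi$ becomes a map $\psi$ between half-spaces whose ``height'' component $\psi_n$ vanishes identically on the flat boundary piece; then all $2n-1$ tangential derivatives of $\psi_n$ vanish there, the real Jacobian factors as a tangential minor times $\partial\psi_n/\partial y_n$, and the two-sided bound on $J_\varphi$ forces $|\partial\psi_n/\partial y_n|\geq m''>0$, whence $|\psi_n(\cdot,h)|\geq s|h|$, i.e.\ $\delta(\varphi(z))\gtrsim\delta(z)$. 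With this in hand one keeps the weight $\delta(z)^2$ from the Taylor step (the paper expands around the boundary projection $\pi(z)$, not the nearest point of $F$), majorizes it by $\delta(\varphi(z))^2$, and the change of variables $w=\varphi(z)$ lands exactly on $\int_\Omega|\nabla f|^2\delta^2\,dV\lesssim\|f\|_2^2$ (Boas--Straube \cite{Bo-St}, Detraz \cite{Det}) with no mismatch between $\operatorname{dist}(\cdot,K)$ and $\delta$. In your write-up the Jacobian bound is used only to control $|J_{\varphi^{-1}}|$ in the change of variables, which is far too weak. Your suggestion of Taylor-expanding $\rho\circ\varphi$ at points of $F$ is aimed in the right direction --- it is essentially equivalent to the flattening argument, since one needs $-\rho(\varphi(z))\gtrsim\delta(z)$ --- but that derivation, which is the heart of the proof, is absent.
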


\begin{proof}
In case $K= \emptyset$, we apply Lemma \ref{T:Lemma1}.

Suppose that $f\in A^{\infty}(\overline\Omega).$ Then
\begin{align*}
\|(K_\varphi - D_\varphi)f\|^2_2
&= \|\overline\partial^*N\overline\partial(f\circ\varphi)\|_2^2\\
&= \langle \overline\partial^*N\overline\partial(f\circ\varphi), \overline\partial^*N\overline\partial(f\circ\varphi)\rangle\\
&= \langle N\overline\partial(f\circ\varphi), \overline\partial(f\circ\varphi)\rangle\\
&\leq \|N\overline\partial(f\circ\varphi)\|_2 \|\overline\partial(f\circ\varphi)\|_2\\
& \leq C\|\overline\partial(f\circ\varphi)\|_2^2,
\end{align*}
since $N$ is bounded, for some $C>0.$

Now we focus on the norm $\|\overline\partial(f\circ\varphi)\|_2 .$ As before
\[\|\overline\partial(f\circ\varphi)\|_2 
\leq \sum_{i=1}^n \sum_{k=1}^n \left\|\frac{\partial f}{\partial w_k}(\varphi)
\frac{\partial\varphi_k}{\partial\overline z_i}\right\|_2.\]
At this point we would like to use the assumptions on the symbol $\varphi,$ namely 
that $\overline\partial \varphi_k = 0$ on $F$ for all $k= 1, \ldots,n.$ This means that for all 
$i, k= 1, \ldots,n$, we have $\frac{\partial\varphi_k}{\partial\overline z_i}(z) = 0,$ for $z\in F.$  
Each $\varphi_i$ is $C^2$-smooth near the boundary so we extend it to a smooth function
 in a neighborhood of $F.$ For a sufficiently small neighborhood $U'\subset U$ of $F$, 
which we again call $U$, we now use the projection $\pi: U \rightarrow \partial\Omega$ 
which is a smooth map such that for all $ z_0 \in \partial\Omega,$   $\pi(z_0)= z_0$ and 
$\pi^{-1}(z_0)$ is a smooth curve in $U$ that intersects $\partial\Omega$ transversally 
at $z_0$.  For $z\in U$, we have $|\pi(z) - z| = \delta(z)$.  This can be seen as follows: Let $n(\zeta)$ denote the outward unit normal vector to $\partial\Omega$ at $\zeta.$ For $\epsilon$ small, the map $(t, \zeta) \rightarrow \zeta + tn(\zeta)$ is a diffeomorphism from $(-\epsilon, \epsilon) \times \partial\Omega$ onto a neighborhood of $\partial\Omega.$ Moreover, for $\epsilon$ small enough, $\pi(z)$ is the point in $\partial\Omega$ closest to $z$ (there is a radius $r > 0$ independent of $\pi(z)$ such that we have interior balls of radius $r$ tangent to $\partial\Omega$ at $\pi(z).$

Now we can use Taylor's theorem for $\frac{\partial\varphi_k}{\partial\overline z_i}$ on open balls centered at any $z_0\in F.$ In this way, for every $i, k= 1, \ldots,n$, 
we obtain small open sets $U_{k, i}\subset U$ containing $F$ such that for every 
$z\in U_{k, i}$, we have the expansion of $\frac{\partial\varphi_k}{\partial\overline z_i}$ 
around $\pi(z)=z_0 \in F$ and keeping in mind that 
$\frac{\partial\varphi_k}{\partial\overline z_i}(z_0) = 0,$ we get for some positive constants $d_{k, j},$
\begin{align*}
\left|\frac{\partial\varphi_k}{\partial\overline z_i}(z)\right|
&\leq  \sum_{j=1}^n d_{k, j} |z-z_0| + O(|z-z_0|^2)\\
&\lesssim |\pi(z) - z|\\
&= \delta(z), z \in U_{k, i}.
\end{align*}
Since $|J_\varphi| \geq m>0$ on $F$, for some $m > 0$ by the smoothness assumption 
on $\varphi$, the Jacobian will be bounded away from zero on a neighborhood of 
$F$, call it $U_0.$  In other words $|J_\varphi|(z) \geq m_0>0$ for $z\in U_0.$

Let $U' = (\cap_{k, i=1}^n U_{k, i}) \cap U_0$,  be an open neighborhood of $F.$ 
Then for any $i, k$ we have
\begin{align*}
\left\|\frac{\partial f}{\partial w_k}(\varphi)\frac{\partial\varphi_k}{\partial\overline z_i}\right\|_2^2
=& \int_{\overline\Omega-U'} \left|\frac{\partial f}{\partial w_k}(\varphi(z))\right|^2 
\left|\frac{\partial\varphi_k}{\partial\overline z_i}(z)\right|^2 dV(z)\\
& + \int_{U'} \left|\frac{\partial f}{\partial w_k}(\varphi(z))\right|^2 
\left|\frac{\partial\varphi_k}{\partial\overline z_i}(z)\right|^2 dV(z)\\
\lesssim& \|f\|_2^2 + \int_{U'} \left|\frac{\partial f}{\partial w_k}(\varphi(z))\right|^2 \delta(z)^2 dV(z).
\end{align*}
Here in the first integral we used the same Cauchy integral estimate argument as in the 
proof of Theorem \ref{T:Theorem1} (since $\varphi(\overline\Omega-U')$ is compactly 
supported in $\Omega$) and the fact that $\varphi_k$ is smooth.  In the second integral 
we used the estimate we showed above.

To finish the estimate of the integral above, we use the assumption that the real 
Jacobian $J_\varphi$ satisfies $|J_\varphi| \geq m>0$ on $F$, for some $m > 0.$  
By the smoothness assumption on $\varphi$, clearly the Jacobian $J_\varphi$ is also 
bounded from above by a bound call it $M.$ For a fixed $p\in F,$ let $B(p, r)\subset U'$ 
for some $r>0.$  Since the boundary of $\Omega$ is smooth, there exists a local 
diffeomormism $\Psi_1$ that maps $B(p, r)\cap\overline\Omega$ into the upper 
half-space $H_n^{+} = \lbrace z\in \mathbb{R}^{2n}: \text{Im}(z_n) \geq 0 \rbrace.$ 

Also $\Psi_1$ maps $B(p, r)\cap F$ into the set 
$F'\subset \lbrace z\in \mathbb{R}^{2n}: \text{Im}(z_n) = 0 \rbrace,$  and similarly there 
exists a diffeomorphism $\Psi_2$ that maps $B(\varphi(p), r)\cap\overline\Omega$ 
into the upper half-space $H_n^{+}$,  $K$ into 
$K'\subset \lbrace z\in \mathbb{R}^{2n}: \text{Im}(z_n) = 0 \rbrace.$ 

Let $p' = \Psi_1(p)\in F'.$ Now the map $\varphi$ is lifted to the map 
$\psi = \Psi_1\circ \varphi \circ \Psi_2^{-1} = (\psi_1, \ldots,\psi_n)$  that maps a 
neighborhood $V'$ of $p'$ in $H_n^{+}$ into a neighborhood $V''$ of 
$\psi(p') \in K'$ in $H_n^{+}.$  

We know that every point $z'\in F'$ has coordinates $z'= (x_1, y_1, \ldots, x_n, 0)$ 
and $\psi$ maps $V'\cap F'$ into $V''\cap K'$, hence $\psi_n(x_1, y_1, \dots, x_n, 0) = 0$ 
for $z\in V'\cap F'.$ Now it is easy to show that $\frac{\partial\psi_n}{\partial x_i} =0, i= 1, \dots, n$ 
on $V'\cap F'$ and similarly $\frac{\partial\psi_n}{\partial y_i} =0, i= 1, \ldots, n-1$ on $V'\cap F'.$ 
Thus the Jacobian 
\[J_\psi(x_1, y_1, \ldots, x_n, 0) 
= J_\psi(x_1, y_1, \ldots y_{n-1}, x_n) \times \frac{\partial\psi_n}{\partial y_n}.\] 
Since $\Psi_1$ and $\Psi_2$ are diffeomorphisms, and $m <|J_\varphi(z)| \leq M, z\in F$ for 
$m, M>0$, clearly $m' < |J_{\psi}(z)| \leq M', z\in F'$ for some constants $m', M'>0.$ Hence 
we may conclude that $|\frac{\partial\psi_n}{\partial y_n}(z)| > m'', z\in V'\cap F',$ for some 
$m''>0.$ If we write this partial derivative using its definition, we get that 
\[\left|\frac{\psi_n(x_1, y_1,\ldots, x_n, h) - \psi_n(x_1, y_1,\ldots, x_n, 0)}{h}\right| \geq s> 0\] 
for $h>0, s>0$ on $V'.$

As mentioned above, $\psi_n(x_1, y_1, \ldots, x_n, 0) = 0$ for $z\in V'\cap F'$, we get that 
\[ |\psi_n(x_1, y_1, \ldots, x_n, h)| \geq s|h|,\] 
which shows that $\delta(z) \leq s' \delta (\psi(z))$ 
on $V'.$  Pulling it back to $\Omega$, the inequality will be preserved and we obtain that 
\[\delta(z) \leq s \delta (\varphi(z)), z\in B(p, r)\] 
and hence on the whole set $U'.$ Now the second integral above can be estimated 
\begin{align}\nonumber 
\int_{U'} \left|\frac{\partial f}{\partial w_k}(\varphi(z))\right|^2 \delta(z)^2 dV(z)
&\lesssim \int_{U'} \left|\frac{\partial f}{\partial w_k}(\varphi(z))\right|^2 \delta(\varphi(z))^2 dV(z)\\
\label{Eqn1}&= \int_{\varphi(U')} \left|\frac{\partial f}{\partial w_k}(w)\right|^2 (\delta(w))^2 |J_{\varphi^{-1}}(w)| dV(w)\\
\nonumber &\lesssim \int_{\Omega} |\nabla f(w)|^2 (\delta(w))^2  dV(w),
\end{align}
since $J_{\varphi}$ is bounded from below on $U'$ and hence $J_{\varphi^{-1}}$ is bounded 
above there.

The last integral in \eqref{Eqn1} is less than equal to $C\|f\|_2^2$ where $C$ is independent of $f$; that was proved for Lipschitz 
domains by Boas and Straube \cite[Proposition 2.1]{Bo-St}.  As for published references, for a domain with a $C^1$ boundary, Theorem 1 in \cite{Det} essentially shows that the last integral in \eqref{Eqn1} is dominated (modulo constants) by $\|f\|_2^2$. We would also like to point out that the last 
integral is in fact equivalent  (modulo constants) to $\|f\|_2^2$ for starshaped domains, the result proved 
by Straube \cite{St1}. Since the smooth domains are locally starshaped, the general case can be deduced from this special case.

Then for any $i, k$ from above, we have
\[\left\|\frac{\partial f}{\partial w_k}(\varphi)\frac{\partial\varphi_k}{\partial\overline z_i}\right\|_2 
\lesssim \|f\|_2.\]
Now the norm
\begin{align*}
\|\overline\partial(f\circ\varphi)\|_2
&\leq \sum_{i=1}^n \sum_{k=1}^n \left\|\frac{\partial f}{\partial w_k}(\varphi)\frac{\partial\varphi_k}{\partial\overline z_i}\right\|_2\\
&\lesssim \|f\|_2,
\end{align*}
which shows
\[\|(K_\varphi - D_\varphi)f\|^2_2 \leq \|\overline\partial(f\circ\varphi)\|_2^2 \lesssim \|f\|_2^2,\] 
for $f\in A^{\infty}(\overline\Omega).$ By the density of $A^{\infty}(\overline\Omega)$ in 
$A^2(\Omega)$ we reach the conclusion that $K_\varphi - D_\varphi$ is bounded on 
$A^2(\Omega)$ and that finishes the proof.
\end{proof}

\begin{remark}
In the proof above we showed that $(K_\varphi - D_\varphi)f = - \overline\partial^*N(Tf)$, where $T$ is continuous in the $L^2$-norm.  Because $\overline\partial^*N$ is continuous on $L^2(\Omega),$ $K_\varphi - D_\varphi$ is.  However if the pseudoconvex domain $\Omega$ has the property that $N$ (and hence $\overline\partial^*N$) is compact, we conclude that $K_\varphi - D_\varphi$ is compact.  Therefore $K_\varphi$ and $D_\varphi$ are simultaneously compact or simultaneously Fredholm.  A good reference for when $N$ is compact is \cite{St2}.

\end{remark}

The following corollary gives us a concrete criterion for boundedness of $K_\varphi$ under the assumptions of Theorem 2 and the additional condition that $\Omega$ is strongly pseudoconvex.

Suppose that $\Omega$ is smooth bounded, that is: there exists a $C^\infty$,
real-valued function $r:\,\,\text{nbhd}
\left(\overline\Omega\right)\longrightarrow\mathbb{R}$ such that
$\Omega=\{ z: r(z) <0\}$ and $dr\neq 0$ when $r=0$. The domain $\Omega$ is strongly 
pseudoconvex, if the complex Hessian $i\partial\bar\partial r(p)\left(\xi,\bar\xi\right) >0$ 
for all $p\in b\Omega$
and all vectors $\xi\in\mathbb{C}^n$ 
satisfying $\partial r(p)\left(\xi\right)=0$.  Near the boundary $r(z)$ and 
$\delta(z)$ are equivalent.

Recall now Proposition \ref{T:Proposition} which characterizes the boundedness of $D_\varphi$ in 
terms of the pull back measure $V_\varphi$ being Carleson for $A^2(\Omega).$  
Under the hypotheses of Theorem \ref{T:Theorem2}, we can conclude that $K_\varphi$ is bounded 
if and only if the measure $V_\varphi$ is Carleson for $A^2(\Omega).$  However Carleson 
measures for strongly pseudoconvex domains have been characterized before in the 
paper \cite{C-Zh}.  Later Abate, Raissy and Saracco gave a complete characterization of 
these measures for the weighted Bergman spaces on strongly pseudoconvex 
domains \cite{A-R-S}. As expected the characterizaion of Carleson measures on 
$A^2(\Omega)$ is expressed in terms of the Bergman kernel $K(z, w)$.  
Let $k_z(w) = K(z, z)^{-\frac{1}{2}} K(z, w)$ denote the normalized Bergman kernel. 
The following result is a special case of \cite[Theorem 2.2]{C-Zh}:

\begin{theorem}[\v{C}u\v{c}kovi\'c \& Zhao]\label{TheoremA} 
Assume that $\Omega\subset \mathbb{C}^n$ is a smooth bounded strongly pseudoconvex 
domain. A positive measure $\mu$ on $\Omega$ is a Carleson measure for $A^2(\Omega)$ 
if and only if
\[\sup_{z\in\Omega} \int_{\Omega} |k_z(w)|^2 d\mu(w) < \infty.\]
\end{theorem}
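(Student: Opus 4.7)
The plan is to prove both implications using the normalized Bergman kernel together with the Fefferman--Hörmander asymptotics for the Bergman kernel on smooth bounded strongly pseudoconvex domains. For necessity, I would simply test the Carleson inequality on the normalized kernels themselves: since the reproducing property gives $\|k_z\|_2 = 1$ for every $z\in\Omega$, applying Definition \ref{T:Definition} to $f = k_z$ yields
\[\int_\Omega |k_z(w)|^2\, d\mu(w) \leq C\|k_z\|_2^2 = C\]
uniformly in $z$, which is exactly the sup condition.

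For sufficiency, I would first translate the sup condition into a geometric Carleson-type estimate on Bergman metric balls $B_K(z,r)$. On a smooth bounded strongly pseudoconvex domain one has $K(z,z) \asymp \delta(z)^{-(n+1)}$, and for a fixed sufficiently small Bergman radius $r$ the pointwise comparability $|K(z,w)| \asymp K(z,z)$ holds for $w \in B_K(z,r)$. Restricting the integral $\int |k_z|^2\, d\mu$ to $B_K(z,r)$ and using the lower bound $|k_z(w)|^2 \gtrsim K(z,z)$ there gives
\[\mu(B_K(z,r)) \lesssim K(z,z)^{-1} \asymp \delta(z)^{n+1}\]
uniformly in $z\in\Omega$.

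Next I would upgrade this geometric condition to the functional Carleson inequality through a covering argument. Choose a maximal family $\{z_j\}\subset\Omega$ such that the balls $B_K(z_j, r/2)$ are pairwise disjoint; then the balls $B_K(z_j,r)$ cover $\Omega$ with bounded overlap, and so do the doubles $B_K(z_j, 2r)$. For $f\in A^2(\Omega)$, the submean value property on Bergman balls yields
\[\sup_{w \in B_K(z_j, r)} |f(w)|^2 \lesssim K(z_j,z_j) \int_{B_K(z_j, 2r)} |f(w)|^2\, dV(w).\]
Combining with the geometric estimate and summing,
\[\int_\Omega |f|^2\, d\mu \leq \sum_j \sup_{B_K(z_j,r)} |f|^2 \cdot \mu(B_K(z_j, r)) \lesssim \sum_j \int_{B_K(z_j, 2r)} |f|^2\, dV \lesssim \|f\|_2^2,\]
where the last step uses the bounded overlap of the $B_K(z_j, 2r)$.

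The main obstacle is the analytic input from strongly pseudoconvex geometry rather than the measure-theoretic scaffolding: one needs the sharp asymptotic $K(z,z) \asymp \delta(z)^{-(n+1)}$, the off-diagonal comparability $|K(z,w)|\asymp K(z,z)$ within a small Bergman ball, and the submean-value property with the correct power of $K(z_j,z_j)$. Each of these relies on the Fefferman--Hörmander expansion of $K(z,w)$ together with the construction of a Whitney-type covering adapted to the Bergman metric near $\partial\Omega$. Once these ingredients are in place, the two implications reduce to the short computations above.
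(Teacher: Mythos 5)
The paper does not actually prove this statement: it is imported as a special case of Theorem 2.2 of \cite{C-Zh}, so there is no in-paper argument to compare against. Your proposal reconstructs the standard proof of the Carleson-measure characterization on strongly pseudoconvex domains, and it is correct in outline; it is essentially the route taken in \cite{C-Zh} and in \cite{A-R-S}. Necessity by testing the Carleson inequality on the unit-norm functions $k_z$ is immediate and complete as you state it. For sufficiency, the chain (uniform bound on $\int_\Omega |k_z|^2\,d\mu$) $\Rightarrow$ ($\mu(B_K(z,r))\lesssim \delta(z)^{n+1}$) $\Rightarrow$ (covering plus submean-value) $\Rightarrow$ (Carleson inequality) is the accepted argument, and your covering step is sound: disjointness of $B_K(z_j,r/2)$ for a maximal separated family, covering by $B_K(z_j,r)$, and bounded overlap of $B_K(z_j,2r)$ all follow from the volume comparison $V(B_K(z,r))\asymp\delta(z)^{n+1}\asymp K(z,z)^{-1}$. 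The one step carrying genuine analytic content, which you correctly flag rather than prove, is the uniform two-sided off-diagonal estimate $|K(z,w)|\asymp K(z,z)$ for $w\in B_K(z,r)$ with $r$ small, together with $K(z,z)\asymp\delta(z)^{-(n+1)}$; these require the Fefferman expansion (or a localization/scaling argument near the boundary) and are precisely the nontrivial input hidden in the citation. Granting those inputs, your argument is complete.
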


Now the following corollary follows immediately.

\begin{corollary}\label{T:Corollary 3}
Let $\Omega\subset \mathbb{C}^n$ be a smooth bounded strongly pseudoconvex 
domain. Then under the assumptions of Theorem \ref{T:Theorem2}, the operator 
$K_\varphi$ is bounded if and only if 
\[\sup_{z\in\Omega} \int_{\Omega} |k_z(\varphi(w))|^2 dV(w) < \infty.\]
\end{corollary}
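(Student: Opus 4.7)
The plan is to chain together the three previously established results: Theorem~\ref{T:Theorem2}, Proposition~\ref{T:Proposition}, and Theorem~\ref{TheoremA}, closing with the change-of-variables identity that defines the pull-back measure.

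First, I would invoke Theorem~\ref{T:Theorem2}: under the stated hypotheses, $K_\varphi - D_\varphi$ is bounded on $A^2(\Omega)$, so $K_\varphi$ is bounded on $A^2(\Omega)$ if and only if $D_\varphi$ is bounded on $A^2(\Omega)$. Next, by Proposition~\ref{T:Proposition}, $D_\varphi$ is bounded on $A^2(\Omega)$ if and only if the pull-back measure $V_\varphi$ is a Carleson measure for $A^2(\Omega)$.

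Then I would apply Theorem~\ref{TheoremA} (the \v{C}u\v{c}kovi\'c--Zhao characterization), using crucially the assumption that $\Omega$ is smooth bounded strongly pseudoconvex: $V_\varphi$ is Carleson for $A^2(\Omega)$ if and only if
\[
\sup_{z\in\Omega} \int_{\Omega} |k_z(w)|^2 \, dV_\varphi(w) < \infty.
\]
Finally, by the definition of the pull-back measure and the standard change-of-variables formula already used in the proof of Proposition~\ref{T:Proposition} (Theorem 2.4.18 in \cite{F}), the integral above equals
\[
\int_{\Omega} |k_z(\varphi(w))|^2 \, dV(w),
\]
so that taking the supremum over $z\in\Omega$ produces the stated criterion.

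There is essentially no obstacle here: all the substantive analytic content has been packaged into the three cited results, and the corollary is a formal concatenation. The only point worth stating carefully is the change-of-variables step, since the integrand $|k_z(\varphi(w))|^2$ must be verified to be measurable (which is immediate from the measurability of $\varphi$ and the continuity of $w \mapsto k_z(w)$ on $\Omega$) and nonnegative so that Tonelli/the pull-back formula applies without integrability hypotheses.
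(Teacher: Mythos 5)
Your proposal is correct and follows exactly the route the paper intends: chaining Theorem~\ref{T:Theorem2}, Proposition~\ref{T:Proposition}, and Theorem~\ref{TheoremA}, then rewriting $\int_\Omega |k_z(w)|^2\,dV_\varphi(w)$ as $\int_\Omega |k_z(\varphi(w))|^2\,dV(w)$ via the pull-back formula. The paper states the corollary "follows immediately" from precisely this concatenation, so there is nothing to add.
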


At this point we would like to point out that the condition in the corollary above is also necessary 
and sufficient for boundedness of an arbitrary composition operator $C_\varphi$ acting on the
 Bergman space of a smooth bounded strongly pseudoconvex domain.  This result is a special 
case of \cite[Theorem 2.1]{C-Zh}.

We would like to finish the paper with an example in dimension $n=1.$  
A variation of this example could be constructed for the open unit ball in $\mathbb{C}^n.$

\begin{example} 
Let $0 \leq \chi(z) \leq 1$ be a smooth cut-off function defined in a neighborhood 
of $\overline{\mathbb{D}}$ defined so that $\chi(z) = 1$ for $\frac {3}{4} < |z| < 1 + \epsilon, \epsilon >0$ 
and $\chi(z) = 0$ for $|z| \leq \frac{1}{2}.$

Define the symbol $\varphi(z) = z(2|z| - |z|^2) \chi(z),$ which is a smooth function on a 
neighborhood of $\mathbb{D}.$ Then clearly $\varphi(z) = 0$ for $|z| \leq \frac{1}{2}.$

For $|z|=1, \varphi(z) = z,$ hence $\varphi$ maps $\partial\mathbb{D}$ into itself and 
$\varphi : \mathbb{D} \rightarrow \mathbb{D}$. Thus in our case $F = K= \partial \mathbb{D}$.

We can write $\varphi(z) = u(x, y) + iv(x, y)$, where $u(x, y) = x(\sqrt{x^2+y^2} - (x^2+y^2))$ 
and $v(x, y) = y(\sqrt{x^2+y^2} - (x^2+y^2)).$  Then one can verify that 
$\overline \partial \varphi=0$ on $F$ and $J_\varphi(z)=1, z\in F.$

Suppose $z, w\in \lbrace z: \frac {3}{4} < |z| \leq 1 \rbrace, z = re^{i\theta}, w = \rho^{i\psi}$
 and $\varphi(z)=\varphi(w).$  Then we have $r^2(2-r) e^{i\theta} = {\rho}^2(2-\rho) e^{i\psi}$
 which shows that $r^2(2-r) = {\rho}^2(2-\rho)$ and $\theta = \psi + 2\pi$.  Using the fact 
that $r\rightarrow r^2(2-r)$ is an increasing function on $(0, 1)$, we conclude that $z=w.$ 
Hence $\varphi$ is a 1-1 function on $\lbrace z: \frac {3}{4} < |z| \leq 1 \rbrace.$

We are interested in the question of boundedness of $K_\varphi$ acting on the 
Bergman space $A^2(\mathbb{D}),$ where the measure is the normalized Lebesgue 
area measure $dA.$  Recall that the Bergman kernel on the unit disk has the form 
$K_z(w) = \frac{1}{(1-\overline z w)^2}$ and hence $k_z(w) = \frac{1-|z|^2}{(1-\overline z w)^2}.$  

According to the Corollary \ref{T:Corollary 3}, $K_\varphi$ is bounded if and only if 
\[\sup_{z\in\Omega} \int_{\Omega} |k_z(\varphi(w))|^2 dA(w) < \infty,\] 
which in our case means
\[\sup_{z\in \mathbb{D}} (1-|z|^2)^2 \int_{\mathbb{D}} 
\frac{1}{|1-z \overline \varphi(w)|^4} dA(w) < \infty.\]
Hence we need to show that
\begin{align*}
\sup_{z\in\mathbb{D}} (1-|z|^2)^2 \left(\int_0^{1/2} \int_0^{2\pi} 1 dA +  \int_{1/2}^1 
\int_0^{2\pi}\frac{1}{|1-z \rho(2\rho - \rho^2) \chi(\rho e^{i\theta}) 
e^{-i\theta}|^4} d\theta \rho d\rho\right) < \infty.
\end{align*}
Thus we only have to focus on the second integral. To estimate the inside integral 
we use the Forelli-Rudin integral estimate for the unit circle in dimension $n=1:$
\[\int_{\partial\mathbb{D}} \frac{1}{|1 - \langle z, \zeta\rangle |^{1+c}} d\zeta \approx (1-|z|^2)^{-c}.\]
In our case $c=3$ hence we can estimate the inside integral and get
\begin{align*}
\int_{1/2}^1 \int_0^{2\pi} \frac{1}{|1-z \rho(2\rho - \rho^2) 
\chi(\rho e^{i\theta}) e^{-i\theta}|^4} d\theta \rho d\rho 
\leq& \int_{1/2}^1 \frac{1}{|1-|z|^2 \rho^2(2\rho-\rho^2)^2|^3} \rho d\rho\\	
\leq& \int_{1/2}^1 \frac{1}{(1-|z|^2 \rho^2)^3} \rho d\rho\\  
=& \int_{1/2}^1 \frac{1}{(1-|z| \rho)^3 (1 + |z| \rho)^3} \rho d\rho\\
 \leq & \int_{1/2}^1 \frac{1}{(1-|z| \rho)^3} d\rho\\
\left(\text{using } u = 1-|z| \rho\right) \quad \quad  
=& \frac{1}{|z|}\left(\frac{1}{(1-|z|)^2} - \frac{1}{(1-\frac{1}{2}|z|)^2}\right)\\
 =& \frac{1-\frac{3}{4}|z|}{(1-|z|)^2 (1-\frac{1}{2}|z|)^2}.
\end{align*}
Now we have 
\begin{align*}
\sup_{z\in \mathbb{D}} (1-|z|^2)^2 \left(\int_{1/2}^1 \int_0^{2\pi}
\frac{1}{|1-z \rho(2\rho - \rho^2) \chi(\rho e^{i\theta}) e^{-i\theta}|^4} d\theta \rho d\rho\right) 
& \leq \sup_{z\in\mathbb{D}} \frac{(1-|z|^2)^2}{(1-|z|)^2} \frac{1-\frac{3}{4}|z|}{(1-\frac{1}{2}|z|)^2}\\
& \leq \sup_{z\in\mathbb{D}} (1+|z|)^2\left(\frac{1-\frac{3}{4}|z|}{(1-\frac{1}{2}|z|)^2}\right)\\
& < \infty.
\end{align*}
Corollary \ref{T:Corollary 3} now shows that $K_\varphi$ is bounded on $A^2(\mathbb{D}).$
\end{example}

\section{Acknowledgment} 
The author would like to thank the referee for reading the manuscript carefully and for the suggestions that improved the paper.  We also thank S\"onmez \c{S}ahuto\u{g}lu for several illuminating 
discussions about the content of this paper, 
Emil Straube for a helpful conversation and for sending us his unpublished manuscript written 
with Harold Boas.  Finally we thank Trieu Le for pointing out a mistake in a previous version of 
the paper.

\end{document}